\newcommand{\trans}{\mathsf{T}}
\newcommand{\e} {\varepsilon}
\newcommand{\Z} {\mathbb{Z}}
\newcommand{\N} {\mathbb{N}}
\newcommand{\E} {\mathbb{E}}
\newcommand{\R} {\mathbb{R}}
\newcommand{\prob} {\mathbb{P}}
\newcommand{\di}[1]{\operatorname{d}\!#1}
\newcommand{\Q} {\mathcal{Q}}
\newcommand{\K} {\mathcal{K}}
\newcommand{\B} {\mathcal{B}}
\newcommand{\F} {\mathcal{F}}
\numberwithin{equation}{section}
\theoremstyle{plain}
\newtheorem{theorem}{Theorem}[section]
\newtheorem{proposition}{Proposition}[section]
\newtheorem{remark}{Remark}[section]
\begin{document}

\begin{frontmatter}
\title{A Lyapunov view on positive Harris recurrence of multiclass queueing networks}

\runtitle{Lyapunov view on positive Harris recurrence}


\begin{aug}
\author{\fnms{Michael} \snm{Sch\"onlein}\ead[label=e1]{schoenlein@mathematik.uni-wuerzburg.de}}

\runauthor{M. Sch\"onlein}

\affiliation{University of W\"urzburg}

\address{Institute for Mathematics\\
         University of W\"urzburg\\
         Emil-Fischer Stra\ss e 40\\
         97074 W\"urzburg, Germany\\
\printead{e1}\\
\phantom{E-mail:\ }
}
\end{aug}

\begin{abstract}
This paper addresses the question when the underlying Markov process of a multiclass queueing network is  positive Harris recurrent. It is well-known that stability of the fluid limit model is a sufficient condition for this. Hence, stability of fluid (limit) models is of vital interest. Recently, it has been shown that if the fluid model satisfies certain properties, it is stable if and only if there exists a Lyapunov function. In this paper a new method is provided to conclude that the underlying Markov process is positive Harris recurrent if the fluid model is stable by using explicitly the Lyapunov function of the fluid model.
\end{abstract}

\begin{keyword}[class=AMS]
\kwd{60K25}
\kwd{90B10}
\kwd{93D05}
\end{keyword}

\begin{keyword}
\kwd{converse Lyapunov theorem}
\kwd{fluid networks}
\kwd{Lyapunov function}
\kwd{positive Harris recurrence}

\end{keyword}

\end{frontmatter}

\section{Introduction}\label{intro}

Typically a multiclass queueing network consists of objects, for instance jobs or customers, that are waiting for service in buffers in front of diverse stations. After service completion, a served job either moves to another buffer at some further station or leaves the network. 

A multiclass queueing network consists of $J$ service stations and $K$ classes of customers. The \emph{interarrival times} for customers of class $k \in \{1,...K\}$ are given by positive random variables 
$ a_k(n) ,  \mbox{ with } n =1,2,3,...\,$, and the of class $k$ customers are given by positive random variables $ s_k(n) , \mbox{ with }  n =1,2,3...\, $. Each customer class is exclusively served at a certain station. The many-to-one mapping $c: \{1,...,K\} \rightarrow \{1,...,J\}$ determines which customer class is served at which station. The corresponding $J \times K$ matrix $C$, called the \emph{constituency matrix}, is defined by $C_{jk} := 1$ if $c(k)=j$ and $c_{jk}=0$ else. For station $j \in \{1,...,J\}$, the set $C(j) := \{ k \in \{1,...,K\}: c(k) =j \}$ is the collection of all customer classes that are served at station $j$.
After a class $k$ customer received service at the station $c(k)$ its routing is given by a $K$ dimensional \emph{Bernoulli random variable} $\phi^k$. 
To be precise, each component of $\phi^k(n)$ is either $0$ or $1$, but the entry $1$ appears at most once. Let $e_k$ denote the $k$th standard basis vector for $\R^K$. Then, the $n$th served class~$k$ customer at station $c(k)$ becomes a class~$l$ customer after service completion if $\phi^k(n) = e_l$, and the customer leaves the network  if $\phi^k(n) = 0$.
For some customer class $k$ the interarrival time may be $a_k(n)=\infty$ for all $n$. Then, the exogenous arrival process is null. The corresponding notation is the following $ \mathcal{E}:= \{k \in \{1,...,K\} : a_k(n) < \infty, \, n \geq1\}$. Further, the buffer at each station is assumed to have infinite capacity.
Throughout this paper we pose the following general assumptions on the interarrival and the service times. 
\begin{enumerate}
\item[$(A1)$] The sequences $a_1,...,a_K, s_1,...,s_K$ and $\phi^1,...,\phi^K$ 
are identically and independently distributed and mutually independent.
\item[$(A2)$] The first moments satisfy
\begin{equation*}
\begin{split}
\alpha_k&:=\E[\,a_k(1)\,]^{-1} < \infty  \text{ for } \,k \in \mathcal{E}, \\ 
\mu_k&:=\E[\,s_k(1)\,]^{-1} < \infty  \text{ for }\,k \in \{1,...,K\},\\
P_k&:=\E[\,\phi^k(1)\,] \geq 0 \quad \, \,  \text{ for }\,k \in \{1,...,K\},
\end{split}
\end{equation*}
and the spectral radius of the matrix $P=( P_1\, ... \, P_K)$ is strictly less than one.
\item[$(A3)$] The distributions of the interarrival times are unbounded and spread out.
\end{enumerate}
A distribution $\nu$ of the interarrival times is called \emph{unbounded}\index{distribution!unbounded} if for each class $k\in \mathcal{E}$ and for all $t\geq 0$ it holds that
\begin{align*}
\prob_{\nu}[a_k(1) \geq t] := \int_{t}^\infty a_k(1) \nu(\di{s})>0.
\end{align*}
Unboundedness expresses that arbitrarily large interarrival times appear with positive probability. Moreover, the distribution of the interarrival times of  customer class $k \in \mathcal{E}$ is said to \emph{spread out}\index{distribution!spread out} if there exists some $l_k \in \{1,2,3,...\}$ and some nonnegative function $q_k:\R_+ \rightarrow \R_+$ with $\int_0^{\infty} q_k(s) \di{s} >0$ such that for all $0 \leq a <b$,
\begin{align*}
\prob_{\nu} \left[ \sum_{i=1}^{l_k} a_k(i) \in [a,b] \right] \,\, \geq \, \,\int_a^b q_k(s)\di{s}.
\end{align*}
We refer to the triple $(a,s,\phi)$ as the \emph{primitive increments} of the multiclass queueing network. 

\subsubsection*{State Space and underlying Markov process}

We restrict ourselves to \emph{head-of-the-line (HL)} queueing networks. The term HL indicates that within each class the customers are served in \emph{First-In-First-Out} (FIFO) order. The evolution of the  HL multiclass queueing network will be described by a stochastic process $X=\{X(t),\, t\geq 0\}$. The corresponding state space is denoted by $(\vartimes, \mathcal{B})$, where $\B$ denotes the Borel $\sigma$-algebra of $\vartimes$. The precise description depends on the particular discipline. We will not introduce the full state space here, the interested reader is referred to \cite{bramsonLN,dai}. In general the state space is of the form
\begin{align*}
 \vartimes = \Big\{x\, :\, x =\big( (k,w),u,v,z\big) \in (\Z\times \R)^\infty \times \R^{|\mathcal{E}|} \times \R^K \times \R^K\Big\}.
\end{align*}
Here $(\Z\times \R)^\infty$ denotes the set of finitely terminated sequences taking values in $\Z\times \R$. The meaning of each of its components are the following. 

The global order of the customers in the network is given by the pair of sequences $(k,w)= \big( (k_1,w_1), (k_2,w_2), ... ,(k_l,w_l) \big) \in (\Z\times \R)^\infty$. The first entry $k_i \in \{1,...,K\}$ denotes the current class of the $i$th customer and the second entry $w_i\geq 0$ reflects the elapsed time since the customer $i$ entered the class $k_i$. The order of elements $(k_i,w_i)$ in the sequence $(k,w)$ is defined by the property that it is descending in $w_i$. The number of customers of each class is denoted by $q=(q_1,\ldots,q_K)$ and $\|q\|:= \sum_{k=1}^K q_k$ is the total number of customers in the state.
The variable $u \in \R^{|\mathcal{E}|}$ denotes the \emph{residual interarrival time}, where $u_k>0$ denotes the remaining time before the next arrival of a class $k \in \mathcal{E}$ customer from outside the network.
The component $v \in \R^K$ represents the \emph{residual service time}, i.e. the coordinate $v_k$ denotes the remaining time service time for the oldest class $k$ customer, where $v_k\geq 0$ and $v_k=0$ only if $q_k=0$.
The component $z_k$ of  $z \in [0,1{]}^{K}$ denotes the proportion of the service effort of station $c(k)$ that the oldest class $k$ customer receives, while other class $k$ customers do not receive any service. This represents the HL property. For each station $j$ we have that if $\sum_{k\in C(j)} q_k >0$, it holds that $ \sum_{k\in C(j)} z_k= 1$, where $z_k=0$ if $q_k=0$. If station $j$ is empty, i.e. $\sum_{k\in C(j)} q_k =0$, then  $\sum_{k\in C(j)} z_k =0$. 

The state space $(\vartimes, \B)$ is measurable. For the objective of the present paper it suffices to consider $\big( q,u,v\big)$ of the state space. With a slight abuse of notation we call $|x|: = \|q\| + \|u\| + \|v\|$ a norm on $\vartimes$. Further, let $\vartimes$ be equipped with the natural induced topology then $\{x \in \vartimes\, :\, |x|\leq \kappa\}$ is a compact subset of $\vartimes$ for every $\kappa>0$. 

The underlying stochastic process $X$ is piecewise deterministic. In particular, $X$ is a Borel right process and satisfies the strong Markov property, cf. \cite{bramsonLN}, \cite{dai}. Thus, the process is equipped with the basic ingredients defining a strong Markov process. So, the process  $X$ is  Borel right and is defined on a measurable space $(\Omega, \F)$ with values in the measurable space $(\vartimes,\B)$. Furthermore, the process is adapted to a filtration $\{ \F_t \}$ and $\{\prob_{x}, \, x \in \vartimes\}$ are probability measures on $(\Omega,\F)$ such that for every $ x \in \vartimes$ we have $\prob_x[X(0)=x]=1$. The collection 
\begin{align*}
 \Big( \Omega, \F, \{\F_t\}, \{X(t),t\geq0\}, \{\prob_x,x\in \vartimes \}\Big)
\end{align*}
defines a strong Markov family. The transition function $P$ is defined by $P(t,x,A) = \prob_x[X(t) \in A]$ for all $t\geq 0$, $x \in \vartimes$, and $A \in \B$.

For a set $A \in \B(E)$ let $ \tau_A := \inf \{ t\geq 0 \, :\, X(t) \in A\}$ denote the \emph{first entrance time}, and for $\delta >0$ the \emph{first entrance time past} $\delta$ is given by $\tau_A (\delta):= \inf \{ t\geq \delta \, :\, X(t) \in A\}$. By the D\'{e}but Theorem, cf. \cite[Theorem~A~5.1]{sharpe1988general}, the first entrance time defines a stopping time. Furthermore, for $A\in \B(E)$ we consider the \emph{occupation time} $\eta_A$, describing the number of visits by $X$ to $A$, given by 
\begin{align*}
\eta_A:= \int_0^{\infty} \mathds{1}_{\{X(t) \in A\}} 
 \di{t}.
\end{align*}
A Markov process $X$ is called \emph{Harris recurrent} if there exists a nontrivial $\sigma$-finite measure $\nu$ such that whenever $\nu(A)>0$ and $A \in \B(E)$ it holds that 
$
 \prob_x\left[ \eta_A = \infty \right] = 1
$
for all $x\in E$.  If the invariant measure can be normalized to a probability measure, the Markov process $X$ is called \emph{positive Harris recurrent}. 
A multiclass queueing network is called \emph{stable} if the underlying Markov process, denoted by $X$, is positive Harris recurrent.

Beginning from the early 1990s with the works of Rybko and Stolyar \cite{rybkostolyar}, Stolyar \cite{stolyar95} and Dai \cite{dai} an effective approach to investigate the stability of multiclass queueing networks is to consider the limits of scaled versions of the underlying Markov process.
To this end, let $(r_n,x_n)_{n \in \N}$ be a sequence of pairs, where $r_n \in \R_+$ and $x_n \in \vartimes$ is a sequence of initial states. We assume that the sequence of pairs satisfies the following conditions
\begin{align}\label{cond-scaling-sequence}
\lim_{n \rightarrow \infty} r_n = \infty, \quad \limsup_{n \rightarrow \infty} \tfrac{\|q_n\|}{r_n} < \infty, \quad \lim_{n \rightarrow \infty} \tfrac{\|u_n\|}{r_n} =\lim_{n \rightarrow \infty} \tfrac{\|v_n\|}{r_n}=0,
\end{align}
where $q_n, u_n$ and $v_n$ denote the queue length, the residual interarrival time, and the residual service time, respectively.  In the sequel, we consider the family $X':= \{X_n(t):= \tfrac{1}{r_n} X^{x_n}(r_n\,t): t\geq 0 ,n \in \N\} $ of Markov processes, where the superscript $x_n$ expresses the dependence on the initial state $x_n \in \vartimes$. Recall that, for each HL queueing network, $(r_n,x_n)_{n\in \N}$ satisfying \eqref{cond-scaling-sequence}, and $\omega \in G$,  there is a subsequence of pairs $(r_{n_i},x_{n_i})_{i\in \N}$ such that
\begin{align}\label{limit-scaled-qn}
  \lim_{i\rightarrow \infty} \tfrac{1}{r_{n_i}} X^{x_{n_i}}(r_{n_i}t,\omega) =
\overline{X}(t,\omega) 
\end{align}
uniformly on compact sets (u.o.c.), that is, for each $t\geq0$ it holds that
\begin{align*}
  \lim_{i\rightarrow \infty} \sup_{0\leq s\leq t}
 | \tfrac{1}{r_{n_i}} X^{x_{n_i}}(r_{n_i}s,\omega) -
\overline{X}(s,\omega)| =0. 
\end{align*}
The set of all these limits is called the fluid limit model, denoted by $\mathcal{FLM}$. A fluid limit model of a queueing discipline is said to be \emph{stable} if there is a $\tau > 0$ such that for any fluid limit $\overline{X}(\cdot)$ the $\overline{Q}(\cdot)$ component satisfies $\overline{Q}(t) = 0$ for all $t \geq \tau\|\overline{Q}(0)\|$.


\medskip

\textsc{Theorem} (Dai \cite{dai}). \hspace{.4cm}{\it  Let a queueing discipline be fixed. Assume that Assumptions (A1)-(A3) are satisfied. If the fluid limit model is stable, then the queueing network is stable.}

\medskip

Furthermore, the work also shows that the fluid limits satisfy a set of equations that is based on the first moments of the primitive increments. A completely deterministic analog, called the associated fluid network, is obtained as the set of solutions to these equations. The associated fluid network is a superset to the fluid limit model and, thus, the stability analysis of multiclass queueing networks can be undertaken deterministicly.

This result released a series of works establishing characterizing stability of the associated fluid networks. cf. \cite{bramsonHLPPS}, \cite{chen}, \cite{yechenpw},  \cite{chenpriority}, \cite{daischool}, \cite{daihaba07}, \cite{yechen}. In most cases the approach to verify the proposed stability conditions is by means of Lyapunov arguments. In \cite{QS-2012} a consistent Lyapunov theory has been developed for fluid networks where the set of fluid level processes satisfies the following conditions: (a) Lipschitz continuity with respect to a global Lipschitz constant (b) scaling invariance (c) shift invariance (d) closedness in the topology of uniform convergence (e) a concatenation property holds (f) the set of fluid level processes depends lower semicontinuously on the initial fluid level. It is shown that stability of a fluid network satisfying (a)-(f) is equivalent to the existence of a continuous Lyapunov function.

In this paper we make use of the converse Lyapunov statement by explicitely using the Lyapunov function admitted by the stable fluid network to construct a Foster-Lyapunov function for the underlying Markov process. To this end, we prove a modified version of the Foster-Lyapunov theorem in \cite[Proposition~4.5]{bramsonLN}.  The line of argument is similar to/inspired by the one for Theorem~2 in \cite{foss-overview}.

\begin{theorem}\label{thm:strict-dai}
Let a queueing discipline be fixed and assume that the Assumptions (A1)-(A3) hold. Suppose that the discipline is so that the associated fluid network satisfies $(a)-(f)$. If the associated fluid network is stable, then the queueing network is stable.
\end{theorem}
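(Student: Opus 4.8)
The plan is to use the converse Lyapunov statement of \cite{QS-2012} to extract an explicit Lyapunov function $V$ for the associated fluid network, and then to employ $V$ itself as the test function in the modified Foster--Lyapunov criterion, rather than re-deriving stability through Dai's state-collapse argument. First I would invoke \cite{QS-2012}: since the associated fluid network satisfies $(a)$--$(f)$ and is stable, there exists a continuous $V$ on the fluid state space with $V(\phi)=0$ if and only if $\phi=0$, which is homogeneous of degree one (inherited from the scaling invariance $(b)$) and non-increasing along every fluid solution. Composing $V$ with the canonical embedding $\iota$ of a state $x=(q,u,v)$ into the fluid state space yields a test function $W(x):=V(\iota(x))$ on $\vartimes$ that is continuous, strictly positive off the origin, and satisfies the linear growth bound $W(x)\le c\,|x|$.

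The core of the argument is a multiplicative drift estimate: I would show that there exist $T>0$, $\vartheta\in(0,1)$ and $\kappa>0$ such that
\[
\E_x\big[\,W(X(T\,W(x)))\,\big]\;\le\;\vartheta\,W(x)\qquad\text{for all } x \text{ with } |x|>\kappa .
\]
The natural route is by contradiction along a scaling sequence. If the estimate failed, there would be states $x_n$ with $|x_n|\to\infty$ violating it; setting $r_n:=|x_n|$ and passing to a fluid limit as in \eqref{limit-scaled-qn}, a subsequence of $\tfrac{1}{r_n}X^{x_n}(r_n\cdot)$ converges u.o.c.\ to a fluid solution $\overline{X}$ with $\|\overline{X}(0)\|$ normalised to one. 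Stability of the fluid network forces the queue component $\overline{Q}(T)=0$ once $T$ exceeds the stability constant $\tau$, and positive-definiteness of $V$ then gives $V(\overline{X}(T))=0$. It remains to transfer this to the prelimit, i.e.\ to establish $\tfrac{1}{r_n}\E[W(X^{x_n}(r_n T))]\to V(\overline{X}(T))=0$, which contradicts the assumed failure of the drift for $n$ large.

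The hard part will be precisely this transfer from almost sure u.o.c.\ convergence to convergence of expectations of the unbounded function $W$. It rests on a uniform integrability property of the scaled state: under $(A1)$--$(A2)$ the family $\{\tfrac{1}{r_n}\,|X^{x_n}(r_nT)|:n\in\N\}$ is uniformly integrable, which follows from the moment bounds on the primitive increments as in \cite{dai}; combined with $W(x)\le c\,|x|$ and the degree-one homogeneity of $V$, this justifies interchanging the limit and the expectation. A subsidiary delicacy is that $V$ is only guaranteed continuous, not smooth, so every drift estimate must be formulated at the level of the flow over the macroscopic horizon $T\,W(x)$ rather than through an infinitesimal generator, mirroring the discrete-horizon reasoning behind Theorem~2 in \cite{foss-overview}.

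Finally I would verify the remaining hypotheses of the modified version of \cite[Proposition~4.5]{bramsonLN}. The set $B=\{x:|x|\le\kappa\}$ is petite, which follows from the unboundedness and spread-out Assumption $(A3)$ exactly as in \cite{bramsonLN,dai}, and one checks $\sup_{x\in B}\E_x[W(X(t))]<\infty$ on the relevant horizon using the same moment bounds. Iterating the multiplicative estimate then shows that the process $W(X(\cdot))$ enters the petite set $B$ in finite expected time uniformly in the starting point, which is exactly the condition the modified Foster--Lyapunov theorem requires to conclude that $X$ is positive Harris recurrent. By the definition of stability this means the queueing network is stable, completing the proof.
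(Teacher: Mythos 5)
Your overall strategy matches the paper's (extract the converse Lyapunov function from \cite{QS-2012}, transfer the fluid decay to expectations via Dai's uniform-integrability lemmas, and feed the result into a modified Foster--Lyapunov criterion), but your test function has a concrete defect that breaks the argument. The Lyapunov function supplied by Theorem~\ref{thm:converse-lyap} lives on the fluid state space $\R_+^K$, so the ``canonical embedding'' $\iota$ can only be $x=(q,u,v)\mapsto q$, and your $W(x)=V(\iota(x))=V(q)$ ignores the residual interarrival and service times. Consequently $W$ vanishes on every state with $q=0$ no matter how large $u$ and $v$ are, so your claim that $W$ is ``strictly positive off the origin'' is false. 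This hurts in three places: (i) the modified criterion (Theorem~\ref{generalized-foster}) requires $W(x)\geq\delta>0$ for all $x$; (ii) the sublevel set $\{x: W(x)\leq\kappa\}$ is unbounded in the $u,v$ directions and hence cannot be shown petite --- Lemma~3.1 of \cite{dai} only yields petiteness of $\{x:|x|\leq\kappa\}$, and a state with an enormous residual interarrival time admits no uniform minorization; (iii) in your final paragraph you iterate the drift to conclude entry into the petite set $B=\{x:|x|\leq\kappa\}$, but a drift for $W=V(q)$ says nothing about hitting that set: for $q=0$ and $\|u\|$ huge, $W(x)=0$, so ``entering $\{W\leq\kappa\}$'' is instantaneous while $|x|$ is arbitrarily large. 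The paper's test function $W(x)=w_2^{-1}(V_L(q))+\|u\|+\|v\|$ adds the residual terms precisely so that its sublevel sets are bounded (hence closed and petite) and the drift actually controls return to a petite set.

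Two further gaps. First, Theorem~\ref{thm:converse-lyap} gives a continuous $V_L$ with $w_1(\|q\|)\leq V_L(q)\leq w_2(\|q\|)$, $w_i\in\K_\infty$; it does \emph{not} assert degree-one homogeneity, so your linear growth bound $W(x)\leq c\,|x|$ --- the very bound you need to dominate $\tfrac{1}{r_n}W(X^{x_n}(r_nT))$ by the uniformly integrable family $\tfrac{1}{r_n}|X^{x_n}(r_nT)|$ --- is unjustified. The paper's fix is to compose with $w_2^{-1}$: the inequality $w_2^{-1}(V_L(q))\leq\|q\|$ holds with no homogeneity at all, and this composed function is what enters its Foster--Lyapunov function. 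Second, in your contradiction step you scale an arbitrary violating sequence by $r_n=|x_n|$ and invoke \eqref{limit-scaled-qn}, but that convergence is only available for sequences satisfying \eqref{cond-scaling-sequence}, which requires $\|u_n\|/r_n\to 0$ and $\|v_n\|/r_n\to 0$; a violating sequence may have residuals of order $r_n$, and then the limit is a delayed object to which stability of the fluid network $\mathcal{FN}$ (defined through the equations \eqref{T4.bal}--\eqref{T4.6}) does not directly apply. The paper's own passage from the sequential estimate to the uniform drift is admittedly terse on this same point, but your write-up makes the unlicensed step explicit, and with the residual-blind $W$ it cannot be repaired even in principle.
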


The remainder of the paper is structured as follows. In Section~2 we briefly recall known properties of fluid networks and display an equivalent Lyapunov characterization of stability of fluid networks satisfying $(a)-(f)$. Section~3 is devoted to the proof of Theorem~\ref{thm:strict-dai}, where we explicitly use the Lyapunov function admitted by the stable associated fluid network to establish a Foster-Lyapunov function for the underlying Markov process. For this reason, we first prove an appropriate version of the Foster-Lyapunov theorem to conclude that the positive Harris recurrence of the underlying Markov process.

\section{Preliminaries on fluid networks}\label{sec:fl-stability}

In this section we provide an approach that was first considered by Rybko and Stolyar in 1992 \cite{rybkostolyar} and was further developed by Stolyar \cite{stolyar95} and Dai \cite{dai}. 

It is well-known that any fluid limit satisfies the following set of dynamic equations, where $M=\operatorname{diag}(\mu)$:
\begin{align}
\label{T4.bal}
{Q}(t) &= {Q}(0) +  \alpha t + P^\trans \, M  {T}(t) - M  {T}(t) \geq 0,\\ \label{T4.2}
{T}(0) &=0 \text{  and  } {T}(\cdot) \text{ is nondecreasing}, \\\label{T4.3}
{W}(t) &= C\,M^{-1}\,({Q}(0) + {A}(t)) - C\, {T}(t),\\ \label{T4.4}
{I}(t) &= et -C\,{T}(t) \text{ and } {I}(\cdot)\text{ is nondecreasing},\\\label{T4.5}
  {I}_j(t) &\mbox{ can only increase when } {W}_j(t) =0, \, j\in \{1,...,J\},\\
\label{T4.6}
 \text{ additional} & \text{ conditions on } ({Q}(\cdot) , {T}(\cdot)), 
\,\text{ specific to the discipline.}
\end{align}
Any pair $({Q}(\cdot),{T}(\cdot))$ satisfying these equations is called a {fluid solution}. In addition, the set of all fluid solutions to the equations \eqref{T4.bal}-\eqref{T4.6} is called the {associated fluid network}\index{fluid network!associated to a queueing network}, denoted by $\mathcal{FN}$. The associated fluid network is a purely deterministic network which is based on the mean values of the primitive increments of the stochastic queueing network.
Further, as shown in \cite{dai}, for any fluid limit $\overline{X}(\cdot)$ the pair  $(\overline{Q}(\cdot),\overline{T}(\cdot))$ is a fluid solution. An immediate consequence of the above theorem is that $\mathcal{FLM} \subset \mathcal{FN}$. Note that, in general the inclusion is strict, see \cite[Section~2.7]{daischool}. Furthermore, the fluid solutions are not unique in general. A related counterexample can be found in \cite{bramsonLN} Example~1 in Section~4.3.

Let $\mathcal{Q}$ denote the set of all fluid level processes $Q(\cdot)$  such that there is an allocation process $T(\cdot)$ such that the pair $(Q(\cdot),T(\cdot))$ is a fluid solution. 
An associated fluid network is called stable if there exists a $\tau > 0$ such that
$Q(t) \equiv 0$ for all $t\geq \tau \|Q(0)\|$ and 
for all $Q \in \mathcal{Q}$.
In order to state generic properties of the set of fluid level processes $\Q$ we introduce a scaling and a shift operator. Given a function $f:\R_+ \rightarrow \R_+^K$, for $r>0$ the \emph{scaling operator} $\sigma_r$ is defined by $ \sigma_r \,f\,(t) := \tfrac{1}{r} f(r\,t)$, and for $s\geq0$ the \emph{shift operator} $\delta_s$ is defined by $ \delta_s \,f\,(t) :=  f(t+s)$. The subsequent proposition summarizes well-known generic properties of fluid networks.

\begin{proposition}\label{prop:fluids-closedGFN}
The set $\Q$ is nonempty and satisfies
\begin{enumerate}
\item[$(a)$] there is a $L>0$, such that for any $Q \in \Q$ and $t,s \in  \R_+$ it holds that 
\begin{equation*}
\|Q(t)-Q(s)\| \leq L \, |t-s|.
\end{equation*}
\item[$(b)$] $Q \in \Q$ implies $\sigma_r\,Q  \in \Q$ for all $r>0$.
\item[$(c)$] $Q \in \Q$ implies $\delta_s\,Q \in \Q$ for all $s\geq0$.
\item[$(d)$] if a sequence $(Q_n)_{n \in \N}$ in $\Q$ converges to $Q_*$ u.o.c., then $Q_* \in \Q$.
\end{enumerate}
\end{proposition}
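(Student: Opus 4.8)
The plan is to establish the four properties by working directly with the defining fluid equations \eqref{T4.bal}--\eqref{T4.6}, since each asserted property amounts to invariance, or closedness, of this system under an elementary operation. Nonemptiness follows from the material recalled in the introduction: fluid limits exist and every fluid limit yields a fluid solution, so $\mathcal{FLM}\subset\mathcal{FN}$ is nonempty and hence $\Q\neq\emptyset$. For the uniform Lipschitz property $(a)$ the key observation is that the two monotonicity requirements already pin down the modulus of continuity of the allocation $T$. Indeed, for $t\geq s$, \eqref{T4.2} gives $T_k(t)-T_k(s)\geq 0$ for each $k$, while \eqref{T4.4} gives coordinatewise $\sum_{k\in C(j)}\big(T_k(t)-T_k(s)\big)\leq t-s$ for every station $j$; hence each $T_k$ is Lipschitz with constant $1$. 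Substituting into the balance equation \eqref{T4.bal} yields
\begin{equation*}
\|Q(t)-Q(s)\|\leq\big(\|\alpha\|+\|P^\trans M-M\|\sqrt{K}\big)\,|t-s|,
\end{equation*}
so that $L$ depends only on the network data $\alpha,P,M$ and not on the particular $Q\in\Q$.

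For $(b)$ and $(c)$ I would verify that the whole system \eqref{T4.bal}--\eqref{T4.6} is stable under the respective operation applied jointly to $(Q,T)$. A useful preliminary is that \eqref{T4.bal} and \eqref{T4.3} combine to give $W=CM^{-1}Q$, so the workload is an affine image of the fluid level and needs no separate tracking. For scaling, with $\tilde Q=\sigma_r Q$ and $\tilde T=\sigma_r T$, equation \eqref{T4.bal} is preserved because its affine-in-$t$, linear-in-$T$ structure commutes with $\sigma_r$; the monotonicity conditions \eqref{T4.2}, \eqref{T4.4} are scale-free; and $\tilde W=\sigma_r W$ makes the complementarity \eqref{T4.5} transfer. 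For the shift $\delta_s$ the one subtlety is to reset the allocation, taking $\hat Q(t)=Q(t+s)$ and $\hat T(t)=T(t+s)-T(s)$ so that $\hat T(0)=0$; the constants $\alpha s+(P^\trans M-M)T(s)$ are absorbed into $\hat Q(0)=Q(s)$, and $\hat W(t)=CM^{-1}\hat Q(t)=W(t+s)$ shows that \eqref{T4.2}--\eqref{T4.5} are inherited.

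For the closedness $(d)$ the argument is analytic rather than algebraic. Given $Q_n\to Q_*$ u.o.c.\ with associated allocations $T_n$ (normalised so that $T_n(0)=0$), the Lipschitz-$1$ bound from $(a)$ makes $(T_n)$ equicontinuous and locally uniformly bounded, so Arzel\`a--Ascoli yields a subsequence with $T_n\to T_*$ u.o.c. Passing to the limit in \eqref{T4.bal}--\eqref{T4.4} is then routine, since affine identities, nonnegativity and monotonicity all survive uniform convergence. The delicate point is the complementarity \eqref{T4.5}: I would argue that if $W_{*,j}>0$ on an open interval, then by uniform convergence $W_{n,j}>0$ there for all large $n$, so each $I_{n,j}$ is constant on that interval and hence so is $I_{*,j}$, which is precisely the closed reformulation of \eqref{T4.5}.

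I expect the genuine difficulty to lie not in $(a)$--$(c)$, which are essentially bookkeeping on a linear system, but in transporting the discipline-specific constraints \eqref{T4.6} through scaling, shift and u.o.c.\ limits simultaneously, together with the Arzel\`a--Ascoli extraction of a limiting allocation in $(d)$. For the standard disciplines these constraints are themselves positively homogeneous, shift-compatible and closed in the topology of uniform convergence, so the three verifications above apply to them verbatim; the proof therefore has to record \eqref{T4.6} as a closed, scale- and shift-invariant condition, after which the four conclusions follow uniformly in $Q\in\Q$.
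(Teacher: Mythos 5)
Your proposal is correct in substance, but be aware that the paper never proves this proposition: it is presented as a summary of ``well-known generic properties,'' with the verifications delegated to the literature (\cite{dai}, \cite{chen}, \cite{stolyar95}) and, for the discipline-specific refinements of Remark~\ref{rem:concatenation-lsc}, to \cite{PhD-schoenlein}. So the comparison here is between your self-contained verification and a citation, and your verification does reproduce the standard arguments correctly. The monotonicity of $T$ from \eqref{T4.2}, combined with the monotonicity of $I(t)=et-CT(t)$ from \eqref{T4.4}, forces each $T_k$ to be $1$-Lipschitz, which through the balance equation \eqref{T4.bal} yields $(a)$ with a constant depending only on $(\alpha,P,M)$; the joint scaling $(\sigma_r Q,\sigma_r T)$ and the shifted-and-reset pair $(\delta_s Q,\,T(\cdot+s)-T(s))$ preserve \eqref{T4.bal}--\eqref{T4.5}, giving $(b)$ and $(c)$; and in $(d)$ the Arzel\`a--Ascoli extraction of a limiting allocation followed by the open-interval argument for the complementarity condition \eqref{T4.5} is exactly the classical closedness proof. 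Your identity $W=CM^{-1}Q$ is also right, since $A(t)$ in \eqref{T4.3} denotes the total (exogenous plus routed) arrival process. The one point your argument cannot close --- and which you flag honestly --- is the transport of the discipline-specific conditions \eqref{T4.6} through scaling, shift and u.o.c.\ limits: this is genuinely discipline-dependent, cannot be established at the level of generality at which the proposition is stated, and is precisely what \cite{PhD-schoenlein} verifies case by case for work-conserving, priority and HL proportional processor sharing networks. In that sense your proof is complete for the generic part of the system and matches, for the remaining part, the level of rigor the paper itself adopts by citing rather than proving.
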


Let $Q_1$ and $Q_2$ be two fluid level processes in $\Q$ satisfying $Q_1(t^*)=Q_2(0)$ for some $t^* \geq 0$. Then, $Q_1\diamond_{t^*}Q_2$ is called the concatenation of $Q_1$ and $Q_2$ at $t^*$, which is defined by 
\begin{equation*}
Q_1\diamond_{t^*}Q_2(t):= \begin{cases} 
Q_1(t) &\quad t \leq t^*, \\
Q_2(t-t^*) &\quad t \geq t^*.
\end{cases}
\end{equation*}
The set $\Q$ of fluid level processes is said to satisfy the \emph{concatenation property} if for any $Q_1$ and $Q_2$ in $\Q$ such that $Q_1(t^*)=Q_2(0)$ for some $t^* \geq 0$ it holds that $Q_1\diamond_{t^*}Q_2 \in \Q$. Moreover, let $\Q_q := \{ Q \in \Q: Q(0)=q\}$ denote the set of fluid level processes in $\Q$ starting in $q\in \R_+^n$. Further, the set-valued map $q \rightsquigarrow \Q_q$ is \emph{lower semicontinuous} if for each $q \in \R_+^K$, $Q \in \Q_{q}$ and $(q_n)_{n \in \N}$ converging to $q$, there is a sequence of fluid level processes $(Q_n)_{n \in \N}$ with $Q_n \in \Q_{q_n}$ which converges to $Q$ u.o.c.

\begin{remark}\label{rem:concatenation-lsc}
In \cite{PhD-schoenlein} it is shown that for fluid networks under general work-conserving, priority and HL proportional processor sharing disciplines the set $\mathcal{Q}$ of fluid level processes has the properties $(a)-(d)$ and additionally
\begin{enumerate}
\item[$(e)$] it satisfies the concatenation property,
\item[$(f)$] the set-valued map $q \rightsquigarrow \Q_{q}$ is lower semicontinuous.
\end{enumerate}
We note that $FIFO$ fluid networks the concatenation property does not hold in general, cf. Example~5.2.15 in \cite{PhD-schoenlein}.
\end{remark}

To characterize stability of associated fluid networks let $\mathcal{K}$ denote the set of all functions $w:\R_+ \to \R_+$ being strictly increasing with $w(0)=0$. Further, let $\mathcal{K}_{\infty}$ denote all unbounded class $\mathcal{K}$ functions.
%
Given an associated fluid network, a function $V : \R_+^K \rightarrow \R_+$ is said to be a \emph{Lyapunov function} if there exist class $\mathcal{K}$ functions $w_i$, $i=1,2,3$ such that
\begin{align}\label{fLF1}
w_1(\|q\|) \leq V(q)  &\leq w_2(\|q\|),\\\label{fLF2}
V(Q(t)) - V(Q(s)) &\leq - \int_{s}^{t} \,w_3(\|Q(r)\|)\,\di{r}
\end{align}
for all $0\leq s \leq t \in \R_+$ and all trajectories $Q \in \Q$.
%
A related definition was given by Dai, see \cite{daischool}.

\begin{theorem}[\cite{QS-2012}]\label{thm:converse-lyap}
An associated fluid network satisfying $(a)-(f)$ is stable if and only if it admits a continuous Lyapunov function and the  functions $w_i$, $i=1,2,3$ are of class $\K_{\infty}$.
\end{theorem}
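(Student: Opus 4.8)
The statement is an equivalence, so I would treat the two implications separately; the properties $(a)$--$(f)$ enter in rather different ways in each, and scaling invariance $(b)$ is what upgrades both arguments to the required \emph{linear} form.

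\emph{Sufficiency (a Lyapunov function forces stability).} Given $V$ as in \eqref{fLF1}--\eqref{fLF2} with $w_1,w_2,w_3\in\K_\infty$, the plan is to extract a \emph{uniform} finite extinction time and then use $(b)$ to make it linear in $\|Q(0)\|$. First I would reduce to $\|Q(0)\|\le 1$: for $Q\in\Q$ with $\|Q(0)\|=c>0$ one passes to $\sigma_c Q\in\Q$. The core estimate is a uniform ``halving time'': there is a $\tau_1>0$ with $\|Q(\tau_1)\|\le\tfrac12$ for every $Q\in\Q$ with $\|Q(0)\|\le1$. To see this, set $\rho:=w_2^{-1}(w_1(\tfrac12))>0$; if $\|Q(\tau_1)\|>\tfrac12$, then since \eqref{fLF2} makes $t\mapsto V(Q(t))$ nonincreasing and $V(Q(\tau_1))\ge w_1(\tfrac12)$, one gets $\|Q(t)\|\ge w_2^{-1}(V(Q(t)))\ge\rho$ on $[0,\tau_1]$, whence \eqref{fLF2} yields $V(Q(\tau_1))\le w_2(1)-\tau_1 w_3(\rho)$, which is negative once $\tau_1>w_2(1)/w_3(\rho)$ --- contradicting $V\ge0$.

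\emph{Sufficiency, iteration.} With the halving time in hand I would iterate using $(b)$ and $(c)$: applying the estimate to $\sigma_{1/2}(\delta_{\tau_1}Q)\in\Q$ gives $\|Q(\tfrac32\tau_1)\|\le\tfrac14$, and inductively $\|Q(\tau_1\sum_{j=0}^k 2^{-j})\|\le 2^{-(k+1)}$. Letting $k\to\infty$ and using the Lipschitz continuity $(a)$ gives $Q(2\tau_1)=0$; monotonicity of $V\circ Q$ together with $V(q)=0\iff q=0$ keeps $Q\equiv0$ afterwards. Undoing the initial scaling shows $Q(t)=0$ for all $t\ge 2\tau_1\|Q(0)\|$, i.e.\ stability with $\tau=2\tau_1$.

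\emph{Necessity (stability yields a Lyapunov function).} Here the natural candidate is $V(q):=\sup_{Q\in\Q_q}\int_0^\infty\|Q(t)\|\,\di{t}$, with target $w_3(x)=x$. I would first check finiteness and the bounds: stability bounds the support of each $Q\in\Q_q$ by $\tau\|q\|$, and $(a)$ gives $\|Q(t)\|\le\|q\|+Lt$, so $V(q)\le C\|q\|^2=:w_2(\|q\|)$; conversely, for any $Q\in\Q_q$ the estimate $\|Q(t)\|\ge\|q\|-Lt$ on $[0,\|q\|/2L]$ gives $V(q)\ge\|q\|^2/4L=:w_1(\|q\|)$, both in $\K_\infty$. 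Using equicontinuity from $(a)$ (Arzel\`a--Ascoli) and closedness $(d)$, the set $\Q_q$ is compact for u.o.c.\ convergence and the supremum is attained. The decrease \eqref{fLF2} is the conceptual heart: given $0\le s\le t$, concatenate the piece $\delta_s Q$ on $[0,t-s]$ with an optimal continuation $\tilde Q\in\Q_{Q(t)}$ of $V(Q(t))$; the shift property $(c)$ and the concatenation property $(e)$ place $(\delta_sQ)\diamond_{t-s}\tilde Q$ in $\Q_{Q(s)}$, and evaluating the defining integral gives $V(Q(s))\ge\int_s^t\|Q(r)\|\,\di{r}+V(Q(t))$, which is exactly \eqref{fLF2} with $w_3(x)=x$.

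\emph{The main obstacle} is the continuity of $V$ demanded by the theorem, which I would split into two halves. Lower semicontinuity comes from $(f)$: for $q_n\to q$ and a maximiser $Q\in\Q_q$, lower semicontinuity of $q\rightsquigarrow\Q_q$ produces $Q_n\in\Q_{q_n}$ with $Q_n\to Q$ u.o.c., and since the extinction times $\tau\|q_n\|$ are uniformly bounded the integrals converge, giving $\liminf_n V(q_n)\ge V(q)$. Upper semicontinuity is where $(a)$ and $(d)$ do the work: taking maximisers $Q_n\in\Q_{q_n}$, Arzel\`a--Ascoli extracts a u.o.c.\ limit $Q_*\in\Q_q$ by $(d)$, and the uniform support bound again lets the integrals pass to the limit, giving $\limsup_n V(q_n)\le V(q)$. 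The two delicate points throughout are ensuring $\Q_q\ne\emptyset$ for every $q\in\R_+^K$ (existence of a fluid solution from each initial state) and controlling the integral tails uniformly in $n$ --- which is precisely what stability provides.
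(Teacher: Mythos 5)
Your proof is correct; note that this paper never proves the theorem itself but imports it verbatim from \cite{QS-2012}, so the relevant comparison is with that reference rather than with anything in the text above. Your reconstruction follows essentially the same route as the cited source: sufficiency via a uniform halving-time estimate made linear in $\|Q(0)\|$ by the scaling invariance $(b)$, and necessity via the value function $V(q)=\sup_{Q\in\Q_q}\int_0^\infty\|Q(t)\|\,\di{t}$, whose decrease follows from the shift and concatenation properties $(c)$, $(e)$, and whose continuity splits into lower semicontinuity from $(f)$ and upper semicontinuity from $(a)$, $(d)$ via Arzel\`a--Ascoli.
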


\section{Proof of the Theorem~\ref{thm:strict-dai}}\label{sec:foster-Lyapunov}

In this section we exhibit how the converse Lyapunov Theorem~\ref{thm:converse-lyap} can be used to give a new proof of fluid approximation theorem if the associated fluid network satisfies the conditions $(a)-(f)$. Based on the Lyapunov function of the associated fluid network we define a Foster-Lyapunov function for the underlying Markov process of the HL queueing network. To this end, we prove a modified version of the Foster-Lyapunov theorem which is appropriate for our purpose. Then, we apply the modified Foster-Lyapunov Theorem to conclude the positive Harris recurrence of the underlying Markov process. 


Let $a$ be a probability measure on $(0,\infty)$ and consider the Markov process $X_a$ with transition function
\begin{align*}
T_a(x,A) = \int_0^{\infty} P(t,x,A) a(\di{t}),
\end{align*}
where $x\in \vartimes$ and $A\in \B$. Let $\mu$ be some nontrivial measure on $(E,\B(E))$. A nonempty set $A \in \B$ is called \emph{petite} if there is a nontrivial measure $\mu$ on $(\vartimes,\B)$ and a probability measure $a$ on $(0,\infty)$ such that the transition function $T_a(x,B)$ of the sample process satisfies $ T_a(x,B) \geq \mu(B)$ for all $x\in A$ and for all $B\in \B$. The subsequent modification of the Foster-Lyapunov criterion for positive Harris recurrence is close to Proposition~4.5 in \cite{bramsonLN}.

\begin{theorem}\label{generalized-foster}
Suppose that $X$ is continuous time Markov process, such that there exist $\e>0$, $\kappa >0$, $c>0$, and measurable function $W:\vartimes \rightarrow \R$ with $W(x)\geq \delta >0$ and
\begin{align}\label{cond:Foster-Lyap}
\E_x[\, W(X(c\,W(x))) \,] \leq \max\{W(x),\kappa\} - \e W(x) 
\end{align}
for all $x\in \vartimes$. Then, for all $x$,
\begin{align*}
\E_x[\,\tau_B(\delta)\,] \leq \tfrac{1}{\e}\max\{W(x),\kappa\},
\end{align*} 
where $B:=\{x\in \vartimes \, :\, W(x)\leq \kappa\}$. In particular, if $B$ is a closed petite set, $X$ is positive Harris recurrent.
\end{theorem}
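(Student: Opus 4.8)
The plan is to establish the two assertions in sequence: first the quantitative bound on the expected entrance time $\E_x[\tau_B(\delta)]$, and then the passage from this bound to positive Harris recurrence via the petite-set machinery. The heart of the matter is the drift inequality \eqref{cond:Foster-Lyap}, which is stated at the \emph{random} time horizon $c\,W(x)$ rather than a fixed time. The natural strategy is to iterate this one-step inequality along a sequence of stopping times, building a supermartingale-type estimate that forces the process into $B$ in finite expected time.

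First I would set up the iteration. Define $t_0 := 0$ and, recursively, $t_{m+1} := t_m + c\,W(X(t_m))$, so that each increment is the random horizon appearing in the drift condition. Writing $Y_m := W(X(t_m))$, the tower property together with the strong Markov property applied at the stopping time $t_m$ turns \eqref{cond:Foster-Lyap} into
\begin{align*}
\E_x[\,Y_{m+1} \mid \F_{t_m}\,] \leq \max\{Y_m,\kappa\} - \e\,Y_m.
\end{align*}
On the event that the process has not yet entered $B$, i.e.\ while $Y_m > \kappa$, the right-hand side equals $Y_m - \e Y_m = (1-\e)Y_m$, so $Y_m$ contracts geometrically in conditional expectation; equivalently $\e\,Y_m \leq Y_m - \E_x[Y_{m+1}\mid\F_{t_m}]$. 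Summing this telescoping bound over $m$ and invoking the nonnegativity $W \geq \delta > 0$ controls $\sum_m \E_x[Y_m]$, hence $\sum_m \E_x[c\,Y_m] = \E_x[\,t_{M}\,]$ where $M$ is the first index with $X(t_M) \in B$. Tracking constants, the telescoping sum yields $\e\sum_{m=0}^{M-1}\E_x[Y_m] \leq \max\{W(x),\kappa\}$, and since $\tau_B(\delta)$ is dominated by the time $t_M$ at which the chain first lands in $B$ (the role of the horizon $c\,W(x)\geq c\delta$ being to keep each step bounded away from zero so that $\delta$-shifted entrance is genuinely attained), one extracts $\E_x[\tau_B(\delta)] \leq \tfrac{1}{\e}\max\{W(x),\kappa\}$. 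The step I expect to be the main obstacle is the careful bookkeeping needed to relate the embedded discrete-time bound on $\sum \E_x[Y_m]$ back to the continuous entrance time $\tau_B(\delta)$ and to justify that the random horizons do not let the process slip in and out of $B$ between sampling instants; this requires exploiting $W(x) \geq \delta$ to ensure the increments $t_{m+1}-t_m \geq c\delta$ are bounded below and handling the stopping of the iteration at the debut into $B$.

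For the second assertion, I would invoke the standard Foster-type criterion for positive Harris recurrence as formulated in \cite[Proposition~4.5]{bramsonLN}. Once $B = \{x : W(x) \leq \kappa\}$ is a closed petite set and we have shown $\E_x[\tau_B(\delta)] < \infty$ for every $x$, with the bound uniformly finite on $B$ itself, the hypotheses of that criterion are met: a finite expected return time to a petite set, together with the regularity of the Borel right process $X$ recorded earlier in the excerpt, yields the existence of a unique invariant probability measure and hence positive Harris recurrence. The only point requiring attention here is that petiteness of $B$ (not mere compactness) is what licenses the Nummelin-splitting/regeneration argument underlying the cited proposition; this is supplied by hypothesis, so the conclusion follows directly. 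I would therefore present the first part in full detail and defer the second part to the cited result, remarking only on how the finite-expected-hitting-time bound feeds into it.
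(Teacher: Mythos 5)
Your proposal is correct and follows essentially the same route as the paper: the paper uses the identical embedded stopping times $T_{n+1} = T_n + c\,W(X(T_n))$ with the strong Markov property, packages your telescoping estimate as the supermartingale $M(n) = c\,W(X(T_n)) + \e\, T_n$ stopped at the first entry into $B$, applies optional sampling to bound $\E_x[T_N]$, and then concludes via the same closed-petite-set hitting-time criterion (Theorem~4.1 in \cite{bramsonLN}, or \cite{tweedie1993generalized}). The differences are purely presentational---your telescoped bound $\e \sum_{m<M}\E_x[Y_m] \leq \max\{W(x),\kappa\}$ combined with $t_M = c\sum_{m<M} Y_m$ is exactly the paper's optional-sampling step unrolled, and both arguments share the same harmless imprecision regarding the factor $c$ in the final constant.
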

\begin{proof}
Let $\kappa>0, \e>0$ and $c>0$ and let $(T_n)_{n \in \N}$ denote a sequence of stopping times defined by $T_0:=0$ and
\begin{align}\label{eq:def-T_n-foster}
 T_{n+1}  := T_n + c\, W(X(T_n)).
\end{align}
We abbreviately denote by $\F_n:= \F_{T_n}$ the $\sigma$-algebra corresponding to the stopping time $T_n$. The strong Markov property and condition \eqref{cond:Foster-Lyap} imply that
\begin{multline}\label{foster-1}
\E_x[W(X(T_n))\, |\, \F_{n-1} ] =  \E_x[\,W(\,X(\,T_{n-1} +c\,W(X(T_{n-1})\,)\,)\,) |\, \F_{n-1} ] \\
 =  \E_{X(T_{n-1})}[\,W(X(\,c\,W(X(T_{n-1})\,)\,)\,)] \\
 \leq \max\{W(X(T_{n-1})),\kappa\} - \e W(X(T_{n-1})).
\end{multline}
Further, let $M(0):= \max\{W(x),\kappa\}$ and for $n \geq 1$ we define
\begin{align}\label{foster-2}
 M(n):= c W(X(T_n)) + \e T_n.
\end{align}
Also, we note that $T_n \in \F_{n-1}$. 

Next, we show that for $N=\inf\{n \in \N \, :\, M(n) \in B \}$ and for all $n \leq N$ we have 
\begin{align*}
\E_x[\, M(n) \, |\, \F_{n-1}    \, ] \leq M(n-1).
\end{align*}
To see this, first note that for $n \leq N$ it holds that $\max\{W(X(T_{n-1})),\kappa\}= W(X(T_{n-1}))$ as well as $T_n \in \F_{n-1}$. Moreover, using \eqref{eq:def-T_n-foster} and \eqref{foster-1} it holds that
\begin{multline*}
\E_x[\, M(n) \, |\, \F_{n-1}    \, ] = \E_x[\,c W(X(T_n)) + \e T_n \, |\, \F_{n-1}    \, ] \\
= c\,\E_x[\, W(X(T_n))  \, |\, \F_{n-1}    \, ]  +\e  \E_x[\, T_n \, |\, \F_{n-1}    \, ] \\
\leq  c \max\{W(X(T_{n-1})),\kappa\} - \e W(X(T_{n-1}))  + \e  \E_x[\, T_n\, |\, \F_{n-1}    \, ] \\
= c W(X(T_{n-1})) - \e(T_n - T_{n-1}) + \e  \E_x[\, T_n\, |\, \F_{n-1}    \, ]
= M(n-1).
\end{multline*}
The validity of the last equality follows from the basic properties of stopping times and expectations. Hence, $M( \min\{n,N\})$ is a supermartingale on $\F_n$. Moreover, by the Optional Sampling Theorem we have that
\begin{align}\label{foster-3}
\E_x[\, M(N) \, ] \leq  \E_x[\, M(0) \, ] = \max\{W(x),\kappa\}.
\end{align}
Besides, since $W(X(T_n)) \leq M(n)$ it also holds that $\tau_B(\delta) \leq T_N$ and together with \eqref{foster-2}, \eqref{foster-3} it follows that for all $x \in \vartimes$ we have 
\begin{align*}
\e \,\E_x[\, \tau_B(\delta) \, ] \leq \E_x[\, M(N) \, ] \leq   \max\{W(x),\kappa\}
\end{align*} 
and, hence, we have $\prob_x[\tau_B<\infty] =1$ for all $x \in \vartimes$ and
\begin{align*}
\sup_{x \in B} \E_x[\, \tau_B(\delta) \, ] \leq \tfrac{\kappa}{\e}.
\end{align*} 
The assertion  then follows from Theorem~4.1 in \cite{bramsonLN} (or Theorem~1.1, 1.2 in \cite{tweedie1993generalized}).
\end{proof}

\begin{proof}[Proof of Theorem \ref{thm:strict-dai}]
Since $\mathcal{FLM} \subset \mathcal{FN}$ and the associated fluid network is stable, there is a $\tau>0$ such that for all $\overline{Q} \in \mathcal{FLM}$ it holds that $\overline{Q}(t)=0$ for all $t\geq \tau\|\overline{Q}(0)\|$. Also, from the converse Lyapunov Theorem~\ref{thm:converse-lyap} there is a continuous Lyapunov function $V_L$ and class $\K_{\infty}$ functions $w_i, \,i=1,2,3$ such that for $q \in \R_+^K$ we have that
\begin{align*}
w_1(\|q\|) &\leq V_L(q)  \leq w_2(\|q\|)\\
\dot V_L(\overline{Q}(t))  &\leq - \,w_3(\|\overline{Q}(t)\|).
\end{align*}
Further, let $(r_n,x_n)_{n \in \N}$ be a sequence of pairs satisfying \eqref{cond-scaling-sequence}. Then, along a subsequence, which is also indexed by $ n$, it holds that 
\begin{align*}
 \tfrac{1}{r_{n}} Q^{x_{n}} (r_{n} t) \rightarrow \overline{Q}(t) \quad \text{ u.o.c.}
\end{align*}
as $n \rightarrow \infty$. In particular, the stability of the fluid limit model implies that
\begin{align*}
 \tfrac{1}{r_{n}} Q^{x_{n}} (r_{n} \tau) \rightarrow 0.
\end{align*}
That is, for any $\tilde \e \in (0,1)$ there is a $N \in \N$ such that for all $n > N$ we have
\begin{align*}
\tfrac{1}{r_{n}} w_2^{-1}( V_L(Q^{x_{n}} (r_{n} \tau))) \leq \tilde \e.
\end{align*}
Moreover, since $w_2^{-1}( V_L(Q^{x_n}(r_n t))) \leq \| Q^{x_n}(r_n t)\|$ and by Lemma~4.5 in \cite{dai} we have that 
\begin{align*}
\left\{ \tfrac{1}{r_{n}} w_2^{-1}( V_L(Q^{x_{n}} (r_{n} \tau))), \, n \in \N \right \}
\end{align*}
is uniformly integrable. In addition, it also holds that
\begin{align*}
\lim_{n \rightarrow \infty}  \tfrac{1}{r_{n}} \| U^{x_{n}} (r_{n} \tau) \| = \lim_{n \rightarrow \infty} \tfrac{1}{r_{n}} \| V^{x_{n}} (r_{n} \tau)\| =0.
\end{align*}
The families $\left\{ \tfrac{1}{r_{n}} U_k^{x_{n}} (r_{n} \tau), \, n \in \N \right \}$ and $\left\{ \tfrac{1}{r_{n}} V_k^{x_{n}} (r_{n} \tau), \, n \in \N \right \}$ are for each $k \in \{1,...,K\}$ uniformly integrable by Lemma~4.3 in \cite{dai}. Hence, we have
\begin{align*}
\limsup_{n\rightarrow \infty} \tfrac{1}{r_{n}} \E \left[ \, w_2^{-1}( V_L( Q^{x_n}(r_n \tau)))\,+ \| U_k^{x_{n}} (r_{n} \tau)\| \, +  \| V_k^{x_{n}} (r_{n} \tau) \|\,\right] \leq \tilde \e.
\end{align*}
Thus, there is a $\kappa>0$ such that for all $r_n >\kappa$ we have
\begin{align*}
\E \left[ \, w_2^{-1}( V_L( Q^{x_n}(r_n \tau)\,)) + \|\,U^{x_{n}} (r_{n} \tau)\,\| + 
\| \, V^{x_{n}} (r_{n} \tau) \, \| \, \right] \leq r_n \,\tilde \e.
\end{align*}
We define the Foster-Lyapunov function $W: \vartimes \rightarrow \R_+$ by
\begin{align*}
W(x):= w_2^{-1} (V_L(q)) + \|u\| + \|v\|.
\end{align*}
Then, for all $x$ with $W(x) >\kappa$ it follows that
\begin{align*}
\E_{x} \left[ \, W( X( W(x)\, \tau) \,)  \, \right] \leq  \tilde \e \,W(x)
\end{align*}
and consequently,
\begin{align*}
\E_{x} \left[ \, W( X(\tau \,W(x) ) \,)  \, \right] \leq \max\{W(x),\kappa\} - \e\, W(x).
\end{align*}
Finally, we have to show that $B=\{x :W(x) \leq  \kappa\}$ is closed and petite. To see that $B$ is petite, note that by Lemma~3.1 in \cite{dai} the set $A=\{x \in \vartimes :|x|\leq \kappa\}$ is closed and petite. Furthermore, since $w_2^{-1}(V_L(q))\leq \|q\|$ it holds that $B \subset A$. In  addition, the continuity of $w_2^{-1}$ and $V_L$ imply that $B$ is closed. As subsets of petite sets are petite, the assertion then follows from Theorem~\ref{generalized-foster}.
\end{proof}

\section*{Acknowledgments}
I am grateful to Fabian Wirth for helpful discussions on fluid and queueing networks.

\end{document}